\documentclass[11pt]{amsart}

\usepackage{amsmath}
\usepackage{fullpage}
\usepackage{xspace}
\usepackage[psamsfonts]{amssymb}
\usepackage[latin1]{inputenc}
\usepackage{graphicx,color}
\usepackage[curve]{xypic} 
\usepackage{hyperref}
\usepackage{graphicx}

\usepackage{amsmath}%
\usepackage{amsthm}%
\usepackage{amscd}
\usepackage{amsfonts}%
\usepackage{amssymb}%
\usepackage{graphicx}

\usepackage{mathrsfs}

\usepackage{tikz}
\usetikzlibrary{matrix,arrows}

\usepackage{tikz-cd}

\newtheorem{theorem}{Theorem}[section]

\newtheorem{corollary}[theorem]{Corollary}

\newtheorem{lemma}[theorem]{Lemma}

\theoremstyle{remark}

\numberwithin{equation}{section}

\newcommand{\Z}{\mathbb{Z}}

\newcommand{\Q}{\mathbb{Q}}

\newcommand{\T}{\mathbb{T}}

\newcommand{\alg}{\mathrm{alg}}

\makeatletter
\@namedef{subjclassname@2020}{%
  \textup{2020} Mathematics Subject Classification}
\makeatother

  \DeclareFontFamily{U}{wncy}{}
    \DeclareFontShape{U}{wncy}{m}{n}{<->wncyr10}{}
    \DeclareSymbolFont{mcy}{U}{wncy}{m}{n}
    \DeclareMathSymbol{\Sha}{\mathord}{mcy}{"58}

\begin{document}
\title[]{Improved bounds for Szpiro's conjecture}
\author{Hector Pasten}
\address{ Departamento de Matem\'aticas,
Pontificia Universidad Cat\'olica de Chile.
Facultad de Matem\'aticas,
4860 Av.\ Vicu\~na Mackenna,
Macul, RM, Chile}
\email[H. Pasten]{hector.pasten@uc.cl}%

\thanks{H.P. was supported by ANID Fondecyt Regular grant 1230507 from Chile.}
\date{\today}
\subjclass[2020]{Primary: 11G05; Secondary: 11F11, 11G18} %
\keywords{Szpiro's conjecture, conductor, modular forms}%

\begin{abstract} In the direction of Szpiro's conjecture for all elliptic curves $E$ over $\mathbb{Q}$, the current best bound is $\log \Delta \ll N \log N$ where $\Delta$ is the absolute value of the minimal discriminant of $E$ and $N$ is the conductor of $E$ (the implicit constant is absolute). This bound dates back to 2013. We obtain the stronger bound $\log \Delta \ll N\log \log N$ which was previously known only under GRH. In addition, for semistable elliptic curves we prove $\log \Delta \ll N$. Our methods build on the theory developed by the author for approaching the $abc$ conjecture via Shimura curves.
\end{abstract}

\maketitle



\section{Introduction}

We will use Vinogradov's notation $X\ll Y$ to mean that there is an absolute constant $c>0$ independent of all parameters for which $|X|\le c\cdot Y$. The constant $c$ is called the implicit constant. If $c$ depends on some parameter this will be indicated with a subscript.

For an elliptic curve $E$ over $\Q$ we write $\Delta$ for the absolute value of its minimal discriminant and $N$ for its conductor. Although this notation does not reflect the dependence on $E$, we hope that this should lead to no confusion. In addition, $h(E)$ denotes the Faltings height of $E$ over $\Q$ as discussed in \cite{Silverman}, not the stable one. From \cite{Silverman} one knows that
\begin{equation}\label{EqnComparison}
\log \Delta \ll \max\{1,h(E)\}
\end{equation}
where the implicit constant is effective. Szpiro's conjecture \cite{Szpiro} predicts that 
$$
\log \Delta \ll \log N \quad\mbox{ (conjectural).}
$$
This is a deep conjecture and it is known that it implies a form of the $abc$ conjecture. Frey \cite{Frey} proposed a stronger version called the \emph{height conjecture}:
$$
h(E) \ll \log N \quad\mbox{ (conjectural).}
$$
Indeed, the bound \eqref{EqnComparison} shows that this last conjecture implies Szpiro's conjecture. We will only discuss Frey's height conjecture, as this is stronger than Szpiro's conjecture.

Progress on these questions has been rather modest. If we restrict our attention to Frey elliptic curves (in particular, $E$ has full rational $2$-torsion) then the bound
$$
h(E) \ll_\epsilon N^{1/3+\epsilon}
$$
holds for every $\epsilon>0$, where the implicit constant is effective \cite{StewartYu}. Recently \cite{PastenThueMahler} the author obtained 
$$
h(E) \ll_\epsilon  N^{1/2+\epsilon}
$$
for all $\epsilon>0$ when the $j$-invariant is an integer, where the implicit constant is effective.

However, if we impose no restriction on $E$, then the best result we know was obtained in 2013 by R. Murty and the author \cite{MurtyPasten}:
\begin{equation}
h(E) \ll N\log N
\end{equation}
with an effective implicit constant. Under GRH, the author obtained in \cite{PastenShimuraCurves} the conditional estimate
\begin{equation}\label{EqnGRH}
h(E) \ll N\log \log N\quad\mbox{ (under GRH).}
\end{equation}

A factorization $N=DM$ is called \emph{admissible} if $\gcd(D,M)=1$, $D$ is squarefree, and $D$ has an even number of prime factors. We write $\phi(n)$ for Euler's totient function.

\begin{theorem}[Main result] \label{ThmMain} For an elliptic curve $E$ over $\Q$ let $N=DM$ be an admissible factorization. Let $\ell$ be a prime not dividing $N$. Then
$$
h(E) \ll M\phi(D)\log(\ell)
$$ 
where the implicit constant is absolute (independent of the previous data) and effective.
\end{theorem}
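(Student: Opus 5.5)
By modularity, $E$ corresponds to a newform $f$ of weight $2$ and level $N$. Since $N=DM$ is admissible, Jacquet--Langlands transfers $f$ to the Shimura curve $X_0^D(M)$ attached to the indefinite quaternion algebra over $\Q$ of discriminant $D$. This gives a modular parametrization $\phi_D\colon J_0^D(M)\to E$, and we may take it to be an optimal quotient up to isogeny, since Faltings height changes by at most a bounded amount times $\log\deg$ under isogeny. The plan follows the strategy of \cite{PastenShimuraCurves}: bound $h(E)$ by $\log$ of the degree of the Shimura curve parametrization, plus the contribution of the Manin-type constant. Concretely, we use the comparison
$$
h(E)\ll \log \deg(\phi_D) + \log(MD) + O(1).
$$
This is the analogue of the Frey-type inequality relating $h(E)$ to the modular degree, obtained there through the Petersson norm of $f$, which controls $\deg\phi_D$ via Zhang/Ribet--Takahashi type formulas. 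It therefore suffices to prove $\log\deg(\phi_D)\ll M\phi(D)\log\ell$.

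To bound the degree, we use that $\deg\phi_D$ divides, or is controlled by, the congruence modulus of $f$ in the Hecke algebra acting on $S_2(X_0^D(M))$. Equivalently, it is bounded by the exponent or order of a finite quotient of the Hecke algebra $\T$ cut out by the $f$-isotypic component and its complement. The key step is a Hecke-algebra argument at the auxiliary prime $\ell\nmid N$. Take the Hecke operator $T_\ell$ on the space of weight $2$ forms of $X_0^D(M)$, whose dimension $g$ is $\asymp M\phi(D)\prod(1+1/p)$, essentially $\ll M\phi(D)\log\log$. Consider the polynomial $P(x)=\prod_{g'\neq f}(x-a_\ell(g'))$ over the other eigenforms. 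Then $P(T_\ell)$ annihilates the complement of $f$ and acts on $f$ by $P(a_\ell(f))$. If $a_\ell(f)\neq a_\ell(g')$ for all $g'$, the congruence module is killed by the nonzero integer $P(a_\ell(f))$, which by Deligne is bounded by $(4\sqrt\ell)^{g}$. This gives $\log\deg\ll g\log\ell$.

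Two issues remain. First, $a_\ell$ may coincide for distinct eigenforms. We would handle this by using instead the resultant or norm of $a_\ell(f)-a_\ell(g')$ over Galois orbits, together with the fact that $a_\ell(f)$ is rational. If $a_\ell(g')=a_\ell(f)$, we discard $\ell$; but the theorem is stated for every $\ell$, so instead we should use an integral element combining $T_\ell$ with the torsion structure of the character group $X$ of the Čerednik--Drinfeld component group at a prime $p\mid D$. This is exactly where the ``$\phi(D)$ rather than $D\prod(1+1/p)$'' comes from. Indeed, the Ribet exact sequence and the character group of $J_0^D(M)$ at $p\mid D$ have rank comparable to $M\phi(D)/p$-type quantities. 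We would bound $\deg\phi_D$ by the order of the component group $\Phi_p$ of $E$ times a determinant of $T_\ell$ on $X_p$, using Hadamard's bound on the $\mathrm{rank}(X_p)\asymp M\phi(D)$ lattice with eigenvalues $\le 2\sqrt\ell+\ell+1$.

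The main obstacle is the uniformity in $\ell$ together with the removal of the $\log\log$ factor. This is the step that converts a GRH-dependent choice of small $\ell$ into an arbitrary fixed $\ell\nmid N$. We would handle it by replacing $T_\ell$ with $T_\ell-(\ell+1)$ acting on the character group. On the supersingular/degenerate part this operator is invertible because Eisenstein-type elements are absent there, so its determinant is a nonzero integer of size at most $(2\ell+2)^{\mathrm{rank}X}$. Since $\mathrm{rank}X\ll M\phi(D)$ exactly, without the $\prod(1+1/p)$ factor, this would yield the theorem.
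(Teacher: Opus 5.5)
There is a genuine gap, and it sits exactly at the first issue you raise: the eigenforms $g'\neq f$ with $a_\ell(g')=a_\ell(f)$. No element of $\Z[T_\ell]$ can separate $f$ from such $g'$, because any $Q(T_\ell)$ acts on both by the same scalar $Q(a_\ell(f))$.

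Your remedy does not separate them either. $T_\ell-(\ell+1)$ is invertible on all cusp forms simply because $|a_\ell|\le 2\sqrt{\ell}<\ell+1$. So it does not annihilate the complement of $f$, and a bound on its determinant on a character group controls an Eisenstein/component-group type quotient, not the congruence module of $f$. Nor is there a known inequality bounding $\deg\phi_D$ by $\#\Phi_p$ times such a determinant.

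The \v{C}erednik--Drinfeld route has two further problems. First, the character group at $p\mid D$ does not exist when $D=1$, which is precisely the case needed for Corollary \ref{Coro1}. Second, when $D>1$ its rank equals the genus of $X_0^D(M)$, which is $\frac{1}{12}\phi(D)M\prod_{p\mid M}(1+1/p)$ up to lower order terms. So your claim that the rank is $\ll M\phi(D)$ ``exactly'' is false.

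The missing idea is to \emph{count} the bad forms rather than try to kill them. Split the Galois orbits $[\chi]\neq[\chi_0]$ of systems of Hecke eigenvalues into two sets:
\begin{itemize}
\item $X$, those with $\chi(T_\ell)\neq a_\ell(f)$. Here your minimal-polynomial argument gives $\log\eta_f([\chi])\le\#[\chi]\log(4\sqrt{\ell})$.
\item $Y$, those with $\chi(T_\ell)=a_\ell(f)$. Here one settles for the crude bound $\log\eta_f([\chi])\ll\#[\chi]\log N$ from the earlier Shimura-curve work.
\end{itemize}
The crude bound on $Y$ is compensated by an effective multiplicity estimate of Murty--Sinha. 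The number of newforms in $S_2(\Gamma_0(m))^{\mathrm{new}}$ with a prescribed $T_\ell$-eigenvalue is $\ll\frac{\log\ell}{\log m}\dim S_2(\Gamma_0(m))^{\mathrm{new}}+\frac{m}{(\log m)^2}$. Apply this via Jacquet--Langlands at each level $Dd$ with $d\mid M$. Levels $Dd<N^{1/2}$ are handled trivially, and $\phi(m)\gg m/\log\log m$ absorbs the second term. This gives $\sum_{[\chi]\in Y}\#[\chi]\le\mu_{D,M}(f,\ell)\ll\phi(D)M\log\ell/\log N$, so $Y$ contributes $\ll\phi(D)M\log\ell$.

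Finally, the factor $\prod_{p\mid M}(1+1/p)$ that you try to remove via character groups disappears simply by counting distinct systems of eigenvalues instead of using the genus. Using Martin's bound $\dim S_2(\Gamma_0(m))^{\mathrm{new}}\ll\phi(m)$, one gets $r_{D,M}=\sum_{d\mid M}\dim S_2(\Gamma_0(Dd))^{\mathrm{new}}\ll\phi(D)\sum_{d\mid M}\phi(d)=\phi(D)M$. Hence $X$ contributes at most $r_{D,M}\log(4\sqrt{\ell})\ll\phi(D)M\log\ell$.
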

Suitable choice of admissible factorization and the prime $\ell$ yields the following consequences:
\begin{corollary}\label{Coro1} For all elliptic curves $E$ over $\Q$ we have 
$$
h(E) \ll N\log\log N
$$
where the implicit constant is absolute and effective.
\end{corollary}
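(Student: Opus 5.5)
We deduce Corollary \ref{Coro1} from Theorem \ref{ThmMain} by taking the trivial admissible factorization $D=1$, $M=N$, and choosing $\ell$ to be the smallest prime not dividing $N$. The empty product $D=1$ is squarefree, coprime to $N$, and has zero (an even number of) prime factors, so it is admissible. Since $\phi(1)=1$, Theorem \ref{ThmMain} then gives
$$
h(E)\ll N\log \ell .
$$
It remains to show that $\log\ell\ll \log\log N$ for this $\ell$. Note that $N\ge 11$ for every elliptic curve over $\Q$, so $\log\log N$ is bounded below by a positive absolute constant.

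To bound $\ell$, let $p_1<p_2<\dots<p_k$ be all the primes less than $\ell$. By minimality of $\ell$, each of them divides $N$, so $\prod_{p<\ell}p\le N$, that is, $\theta(\ell-1)\le \log N$, where $\theta$ is Chebyshev's function. Chebyshev's elementary lower bound $\theta(x)\gg x$ for $x\ge 2$ (for instance $\theta(x)\ge x/2$ for $x\ge 3$, or the bound $\prod_{p\le x}p\ge 2^{x/2}$ type estimates via binomial coefficients) then yields $\ell\ll \log N$. The small cases $\ell=2,3$ are trivially within this bound. Hence $\log \ell\le \log\log N+O(1)\ll \log\log N$, using the lower bound on $\log\log N$ above.

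Combining the two steps gives $h(E)\ll N\log\log N$. The implicit constant is absolute and effective because it comes from the effective constant in Theorem \ref{ThmMain} together with the explicit Chebyshev estimate. There is no real obstacle here. The only points requiring care are verifying that $D=1$ counts as admissible and handling the lower bound for $\log\log N$ so that the additive $O(1)$ is absorbed.
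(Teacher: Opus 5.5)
Your proof is correct and is exactly the paper's argument: take $D=1$, $M=N$, let $\ell$ be the least prime not dividing $N$, and use the elementary bound $\ell\ll\log N$ from Chebyshev's estimate. One small slip is that your parenthetical example $\theta(x)\ge x/2$ for $x\ge 3$ is false as stated (e.g.\ $\theta(4)=\log 6<2$); since any bound $\theta(x)\ge cx$ for $x\ge 2$ with an absolute constant $c>0$ holds and suffices, the argument is unaffected.
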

This makes \eqref{EqnGRH} unconditional. Furthermore:

\begin{corollary}\label{Coro2} Let $S$ be a finite set of primes. For all elliptic curves $E$ over $\Q$ which are semistable outside $S$ we have 
$$
h(E) \ll_S N
$$
where the implicit constant is effective and only depends on $S$.
\end{corollary}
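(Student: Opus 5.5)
We deduce Corollary~\ref{Coro2} from Theorem~\ref{ThmMain} by choosing the admissible factorization $N=DM$ and the auxiliary prime $\ell$ so that $M\phi(D)\log \ell \ll_S N$. There is a tension between the two choices. We need $\ell\nmid N$, but $N$ may be divisible by every small prime, which forces $\ell$ to be large (up to about $\log N$). The loss in $\log\ell$ will be compensated by the saving $\phi(D)/D$ coming from the small primes, which all divide $N$ and can be put into $D$.

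First we split $N=N_SN'$, where $N_S$ is the $S$-part of $N$ and $N'$ is coprime to the primes of $S$. Since $E$ is semistable outside $S$, $N'$ is squarefree. The conductor exponents at $2$, at $3$ and at $p\ge 5$ are at most $8$, $5$ and $2$ respectively, so $N_S\le C_S:=\prod_{p\in S}p^8$. Let $\ell$ be the smallest prime not dividing $N$; every prime $p<\ell$ then divides $N$. Now choose $D$ as follows.
\begin{itemize}
\item If $\omega(N')$ is even, take $D=N'$ and $M=N_S$.
\item If $\omega(N')$ is odd, let $p_0$ be the largest prime divisor of $N'$, and take $D=N'/p_0$, $M=N_Sp_0$.
\end{itemize}
In both cases $D$ is squarefree with an even number of prime factors, $\gcd(D,M)=1$, and $DM=N$, so the factorization is admissible. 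The case $N'=1$ gives $D=1$, which is allowed.

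Next we estimate $M\phi(D)=N\cdot\phi(D)/D$. The set of primes dividing $D$ contains every prime $p<\ell$, except those in $S$ and possibly $p_0$. If $p_0<\ell$, then all primes of $N'$ are below $\ell$ and only $p_0$ is missing. Hence
$$
\frac{\phi(D)}{D}\le \prod_{p\mid D}\Bigl(1-\frac1p\Bigr)\le 2\prod_{p\in S}\Bigl(1-\frac1p\Bigr)^{-1}\prod_{p<\ell}\Bigl(1-\frac1p\Bigr)\ll_S \frac{1}{\log \ell},
$$
where the last step is Mertens' theorem. (If $\ell=2$ the product over $p<\ell$ is empty, and the bound reads $\phi(D)/D\le 1\ll 1/\log 2$, which is fine.) Therefore $M\phi(D)\log\ell\ll_S N$, and Theorem~\ref{ThmMain} gives $h(E)\ll_S N$. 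Effectivity is inherited from Theorem~\ref{ThmMain} and from the effective form of Mertens' theorem.

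The only delicate step is this balancing of $\log\ell$ against $\phi(D)/D$. The point is that $\ell$ must be taken to be the \emph{smallest} prime not dividing $N$, so that every prime below $\ell$ divides $N$. It is equally important that the prime removed to fix parity is the \emph{largest} prime of $N'$, so that at most one factor $(1-1/p_0)^{-1}\le 2$ is lost from the Mertens product.
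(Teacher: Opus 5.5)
Your proof is correct and is essentially the paper's argument: take $\ell$ to be the smallest prime not dividing $N$, let $D$ be the non-$S$ part of $N$, move its largest prime into $M$ when parity requires it, and use Mertens' theorem to get $\phi(D)\log\ell\ll_S D$. The differences are cosmetic: taking $p_0$ as the largest prime of $N'$ rather than of $N$ spares you the paper's enlargement of $S$; your treatment of $N'=1$ (where every prime below $\ell$ lies in $S$) replaces the paper's appeal to effective finiteness of curves with good reduction outside $S$; and your bound $N_S\le C_S$ is never actually used.
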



\section{Proof of the main result}

The goal of this section is to prove Theorem \ref{ThmMain}. All the bounds discussed in this section are effective. We do not repeat it at every step where implicit constants appear.

Let $S^D(M)$ be the space of weight $2$ quaternionic modular forms for $\Gamma_0^D(M)$. Let $f$ be the newform attached to $E$: via the Jacquet--Langlands correspondence it comes from the standard newform attached to $E$ in $X_0(N)$ via the modularity theorem \cite{Wiles, TaylorWiles, BCDT}. 

Let $\T_{D,M}$ be the Hecke algebra acting on $S^D(M)$ generated by Hecke operators of index coprime to $N=DM$. By Jacquet--Langlands, the set of systems of Hecke eigenvalues $\chi:\T_{D,M}\to \Q^\alg$ comes from the Hecke eigenforms in
$$
V_{D,M}=\bigoplus_{d|M}  S_2(\Gamma_0(Dd))^{\rm new} 
$$
where we use new part of spaces of classical modular forms of weight $2$ for $\Gamma_0(m)$. 

Let $r_{D,M}$ be the number of systems of Hecke eigenvalues in $S^D(M)$, so that $r_{D,M} = \dim V_{D,M}$. From Proposition 7.1 in \cite{PastenShimuraCurves} one gets
\begin{equation}\label{EqnrDM}
r_{D,M} \ll \phi(D)M.
\end{equation}
Let us recall that $\ell$ is a prime not dividing $N$. Let $\mu_{D,M}(f,\ell)$ be the multiplicity of $\alpha=a_\ell(f)\in \Z$ among the eigenvalues of the Hecke operator $T_{\ell}$ acting on $V_{D,M}$. Thus, $\mu_{D,M}(f,\ell)$ is the number of systems of Hecke eigenvalues $\chi$ of $S^D(M)$ satisfying $\chi(T_\ell) = \alpha$. 

\begin{lemma}\label{LemmaMultiplicity} We have
$$
\mu_{D,M}(f,\ell)\ll \frac{\phi(D)M \log \ell}{\log N}.
$$
\end{lemma}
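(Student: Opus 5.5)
We bound the multiplicity by counting eigenforms whose $\ell$-th coefficient is the fixed integer $\alpha=a_\ell(f)$. The plan is to show that such eigenforms satisfy a congruence to a large modulus, and that a congruence of this kind can only hold for few of them. Consider the operator $T_\ell-\alpha$ acting on $V_{D,M}$, or equivalently on an integral lattice such as the $\Z$-module of forms with integral Fourier coefficients in each summand $S_2(\Gamma_0(Dd))^{\rm new}$. Its characteristic polynomial is
$$
P(x)=\prod_{\chi}(x-\chi(T_\ell)),
$$
where $\chi$ runs over the $r_{D,M}$ systems of Hecke eigenvalues. This polynomial has integer coefficients, and $\alpha$ is a root of multiplicity exactly $\mu=\mu_{D,M}(f,\ell)$. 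Write $P(x)=(x-\alpha)^{\mu}Q(x)$ with $Q(\alpha)\neq 0$. By Deligne's bound, every root $\beta$ of $Q$ satisfies $|\beta-\alpha|\le 4\sqrt\ell$. We will use this to obtain an upper bound on the resultant or discriminant, and play it against a lower bound coming from congruences.

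The key arithmetic input is level-raising and lowering, i.e. congruences between $f$ and other forms at primes $p\mid N$. I expect the cleaner route to go through the Shimura curve theory of \cite{PastenShimuraCurves}. There, the degree of the modular parametrization $X_0^D(M)\to E$, or the congruence modulus of $f$ in $S^D(M)$, is shown to be divisible by a large quantity that grows with $N$; heuristically, each prime $p\mid N$ contributes a factor through Ribet's level-raising and lowering congruences. I would instead use the following direct algebraic version. Let $\mathbb T$ be the $\Z$-algebra generated by $T_\ell$ on the lattice. The $\alpha$-generalized eigenspace has rank $\mu$, and it meets its complement with index dividing $\operatorname{Res}((x-\alpha)^{\mu},Q)=\pm Q(\alpha)^{\mu}$. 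Then $|Q(\alpha)|\le(4\sqrt\ell)^{r_{D,M}-\mu}$, so
$$
\log|Q(\alpha)|\ll r_{D,M}\log\ell\ll \phi(D)M\log\ell
$$
by \eqref{EqnrDM}.

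It remains to produce a lower bound of the form $\log|Q(\alpha)|\gg \mu\log N$, which rearranges to the claim $\mu\ll \phi(D)M\log\ell/\log N$. For this we need that the congruence module between the $\alpha$-part and its complement is divisible by roughly $N$ per unit of multiplicity. Concretely, I would aim to show that every prime $p\mid N$ divides this congruence module to a power that grows linearly in $\mu$, either via the Eichler–Shimura relation $T_p\equiv \mathrm{Frob}+\mathrm{Ver}$ on the component groups of the Jacobian of the Shimura curve, or via the Ribet–Takahashi formula for the degree. This gives $\prod_{p\mid N}p^{\mu}\mid Q(\alpha)^{c}$ for an absolute constant $c$. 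Since $\log\operatorname{rad}(N)\gg\log N$ up to squarefull parts, we obtain $\mu\log N\ll\log|Q(\alpha)|$, which combined with the upper bound proves the lemma.

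The main obstacle is this lower bound: justifying that the multiplicity forces divisibility by all primes of $N$ rather than by only one. A simpler fallback is the trivial case. If $\mu\log N\le C\,\phi(D)M\log\ell$ fails, then $\mu$ exceeds roughly $r_{D,M}/\log N$, and one can then try to derive a contradiction from an equidistribution (Serre/Sato–Tate vertical) estimate. The latter counts eigenvalues of $T_\ell$ falling at a single point and gives a bound of the shape $\mu\ll r_{D,M}\log\ell/\log(\text{level})$ directly; this is the form of the lemma, and I would first try to import such an effective vertical equidistribution bound, in the style of Murty–Sinha, for $T_\ell$ on each $S_2(\Gamma_0(Dd))^{\rm new}$ and sum over $d\mid M$.
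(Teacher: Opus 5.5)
Your main route has a genuine gap. The upper bound $\log|Q(\alpha)|\le (r_{D,M}-\mu)\log(4\sqrt{\ell})$ is fine. But combined with it, the lower bound $\log|Q(\alpha)|\gg\mu\log N$ already implies the lemma, so it carries the lemma's entire content, and you rest it on the claimed divisibility $\prod_{p\mid N}p^{\mu}\mid Q(\alpha)^c$. That claim is false. Since $f$ itself is counted, $\mu\ge 1$, so you would need every $p\mid N$ to divide $Q(\alpha)=\prod_{\beta\ne\alpha}(\alpha-\beta)$ up to a bounded power. This already fails for $N=11$: $V_{1,11}$ is spanned by $f$, so $Q=1$. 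It also fails for $N=37$, $\ell=2$: the two newforms have $a_2=-2$ and $a_2=0$, so $Q(\alpha)=\pm 2$. The heuristic behind the claim is misplaced too. Ribet's level raising/lowering congruences attached to a prime $p\mid N$ hold modulo primes $\lambda$ dividing quantities such as $v_p(\Delta)$ (orders of component groups) or $a_p(g)^2-(p+1)^2$, not modulo $p$ itself. This is exactly why, in the Shimura-curve method, large congruence moduli of $f$ reflect a large discriminant rather than the conductor. In any case those results concern congruences of $f$ for the full Hecke algebra, not the $T_\ell=\alpha$ block of the algebra generated by $T_\ell$.

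Your fallback is the right idea and is the paper's proof, but the bound does not come out with $\log N$ ``directly''. Jacquet--Langlands gives $\mu_{D,M}(f,\ell)=\sum_{d\mid M}\mu_{Dd}$. For $m=Dd$, Murty--Sinha (Theorem 7) gives $\mu_m\ll\frac{\log\ell}{\log m}\dim S_2(\Gamma_0(m))^{\rm new}+\frac{m}{(\log m)^2}$, which has $\log m$ rather than $\log N$ and an additive error term. The steps you are missing are these.
(i) Use Martin's bound $\dim S_2(\Gamma_0(m))^{\rm new}\ll\phi(m)$.
(ii) For levels $Dd<N^{1/2}$, drop Murty--Sinha and use $\mu_{Dd}\le\dim S_2(\Gamma_0(Dd))^{\rm new}\ll N^{1/2}$. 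These levels then contribute $\ll\sigma_0(M)N^{1/2}\ll N^{2/3}$, which is negligible since $\phi(D)M\ge\phi(N)\gg N/\log\log N$.
(iii) For $Dd\ge N^{1/2}$, use $\log(Dd)\ge\frac12\log N$, and absorb $m/(\log N)^2$ into $\phi(m)\log\ell/\log N$ using $\phi(m)\gg m/\log\log m$.
(iv) Sum using $\phi(Dd)=\phi(D)\phi(d)$ (as $\gcd(D,M)=1$) and $\sum_{d\mid M}\phi(d)=M$.
With these steps the fallback is a complete proof, and the resultant/congruence-module part should be dropped.
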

\begin{proof}
By Theorem 1 and Lemma 17 of \cite{Martin} we have 
$$
\dim S_2(\Gamma_0(m))^{\rm new}\ll \phi(m).
$$
Let $\mu_m$ be the multiplicity of $\alpha$ as an eigenvalue of $T_\ell$ acting on $S_2(\Gamma_0(m))^{\rm new}$. 

If $m< N^{1/2}$ we have
$$
\mu_{m} \le \dim S_2(\Gamma_0(m))^{\rm new}\ll N^{1/2}.
$$

Let us now assume $m\ge N^{1/2}$. By Theorem 7 of \cite{MurtySinha}
$$
\mu_m \ll \frac{\log \ell}{\log m} \dim S_2(\Gamma_0(m))^{\rm new} + \frac{m}{(\log m)^2} 
$$
 so, recalling $m\ge N^{1/2}$, we get
$$
\mu_m \ll \frac{\log \ell}{\log N} \phi(m) + \frac{m}{(\log N)^2} \ll \frac{\log \ell}{\log N} \phi(m) 
$$
where the last bound uses $\phi(m)\gg m/\log\log m$ for $m\ge 3$. 

Putting together both cases we get
$$
\begin{aligned}
\mu_{D,M}(f,\ell)&= \sum_{\substack{d|M\\ Dd < N^{1/2}}} \mu_{Dd}+\sum_{\substack{d|M\\ Dd \ge N^{1/2}}} \mu_{Dd} \ll \sigma_0(M)N^{1/2} + \sum_{d|M} \frac{\log \ell}{\log (N)} \phi(Dd) \\
&\ll N^{2/3}+\frac{\phi(D) \log \ell}{\log N}\sum_{d|M} \phi(d) \ll \frac{\phi(D)M \log \ell}{\log N}
\end{aligned}
$$
where $\sigma_0(m)$ is the number of divisors of $m$ and we use the elementary bound $\sigma_0(m)\ll_\epsilon m^{\epsilon}$.
\end{proof}

For a system of Hecke eigenvalues $\chi:\T_{D,M}\to \Q^{\alg}$ we let $[\chi]$ be its class under Galois conjugation. If $\chi$ does not come from $f$ and $\chi_0$ is the system coming from $f$, define 
$$
\eta_{f}([\chi]) = [\Z : \chi_0(\ker(\chi))].
$$ 
This is a positive integer that measures congruences between $f$ and $[\chi]$, see Section 1.6 of \cite{PastenShimuraCurves}.

From Theorem 1.5 and Theorem 1.6 in \cite{PastenShimuraCurves} one has 
$$
h(E) \ll \sum_{[\chi]\ne [\chi_0]} \log \eta_{f}([\chi])
$$
for $N\gg 1$, where the sum is over all Galois conjugation classes of systems of Hecke eigenvalues in $S^D(M)$ that do not come from $f$ (the effective condition $N\gg 1$ is just to ensure that the sum is non-empty).

Let us now bound each $\eta_{f}([\chi])$. Split the set of systems of Hecke eigenvalues $\chi : \T_{D,M}\to \Q^\alg$ not coming from $f$ into two sets: $X$ consisting of those with $\chi(T_\ell)\ne \alpha$ and $Y$ consisting of those with $\chi(T_\ell)=\alpha$.

For $\chi\in X$ we choose $p=\ell$ and $P(x)$ as the minimal polynomial of $\chi(T_\ell)$.  Using Proposition 5.4 of \cite{PastenShimuraCurves} we get that $\eta_{f}([\chi])$ divides $P(\alpha)$. We have $P(\alpha)\ne 0$ as $P(x)$ is irreducible and $\chi\in X$, therefore $\eta_{f}([\chi])\le  |P(\alpha)|$. It follows that
$$
\log \eta_{f}([\chi]) \le \log |P(\alpha)|\le \#[\chi]\cdot \log(4\sqrt{\ell})\ll  \#[\chi]\cdot \log(\ell)
$$
using the standard Hasse--Weil bound on Fourier coefficients of Hecke eigenforms.

For $\chi\in Y$ we use the bound obtained in the proof of Theorem 7.2 of \cite{PastenShimuraCurves}:
$$
\log \eta_{f}([\chi]) < \#[\chi]\cdot\left(\log N + \frac{4\log N}{\log\log N}\right) \ll \#[\chi]\cdot \log N.
$$
Putting all together and using \eqref{EqnrDM} together with Lemma \ref{LemmaMultiplicity}, we get
$$
\begin{aligned}
h(E) &\ll \sum_{[\chi]\in X} \log \eta_{f}([\chi]) + \sum_{[\chi]\in Y} \log \eta_{f}([\chi])  \ll \sum_{[\chi]\in X} \#[\chi]\cdot \log(\ell) + \sum_{[\chi]\in Y} \#[\chi]\cdot \log N\\
& \le r_{D,M} \log \ell + \mu_{D,M}(f,\ell) \log N \ll \phi(D) M \log \ell.
\end{aligned}
$$
This concludes the proof of Theorem \ref{ThmMain}. \qed


\section{Proof of the corollaries}

Let $\ell$ be the smallest prime not dividing $N$. Elementary estimates show that $\ell\ll \log N$ and Corollary \ref{Coro1} follows from Theorem \ref{ThmMain} with $D=1$ and $M=N$.

For Corollary \ref{Coro2} we may enlarge $S$ to contain $2$ and all the primes up to the largest element of the original $S$ (if non-empty). We can assume that $N$ is not completely supported on $S$, because the set of elliptic curves with bad reduction contained in $S$ is effectively computable. Let $m$ be the part of $N$ supported on $S$. Let $M=m$ if  $N/m$ has an even number of prime factors, otherwise let $M=qm$ where $q$ is the largest prime divisor of $N$. Let $D=N/M$. Then 
$$
\phi(D)\log\ell = D\log(\ell)\prod_{p|D} \left(1-1/p\right)
$$
Note that every prime $p<\ell$ not dividing $M$ appears in the product, by minimality of $\ell$. Hence, using Mertens's theorem we get
$$
\phi(D)\log\ell \le D\log(\ell)\prod_{p|M}\left(1-1/p\right)^{-1}\prod_{p<\ell} \left(1-1/p\right) \ll \frac{D\log \ell}{\log \ell}\prod_{p|m}\left(1-1/p\right)^{-1}  \ll_S D
$$
where we used $M=qm$ or $M=m$. Corollary \ref{Coro2} now follows from Theorem \ref{ThmMain}.


\section{Acknowledgments}

This research was supported by ANID Fondecyt Regular grant 1230507 from Chile. 

AI models were used as follows. The models suggested an argument leading to a bound of the form $h(E) \ll N(\log \log N)^2$ which was subsequently verified and reconstructed by the author. That motivated further work by the author, leading to the stronger results proved in this paper, including $h(E) \ll N \log \log N$ in general and $h(E) \ll_S N$ for $E$ semistable away from $S$. In addition, AI models were used for proofreading and suggesting presentation improvements. The author takes full responsibility for the content of this paper.

The AI models used (in the period July-September 2026) were OpenAI's ChatGPT-5.6 Sol Pro and ChatGPT-6 Astra, as well as Anthropic's Claude Opus 5 and Claude Fable 5.1.


\end{document}